\newtheorem{theorem}{Theorem}[section]
\newtheorem{lemma}[theorem]{Lemma}
\newtheorem{definition}[theorem]{Definition}
\newtheorem{remark}[theorem]{Remark}
\newcommand{\beqa}{\begin{eqnarray*}}
\newcommand{\eeqa}{\end{eqnarray*}}
\newcommand{\beqn}{\begin{eqnarray}}
\newcommand{\eeqn}{\end{eqnarray}}
\newcommand{\e}{\varepsilon}
\newcommand{\Ra}{\Rightarrow}
\newcommand{\Lra}{\Leftrightarrow}
\newcommand{\ds}{\displaystyle}
\newcommand{\R}{\mathbb R}
\newcounter{cnt1}
\newcounter{cnt2}
\newcounter{cnt3}
\newcounter{cnt4}
\newcommand{\blr}{\begin{list}{$($\roman{cnt1}$)$}
 {\usecounter{cnt1} \setlength{\topsep}{0pt}
 \setlength{\itemsep}{0pt}}}
\newcommand{\bla}{\begin{list}{$(\alph{cnt2})$}
 {\usecounter{cnt2} \setlength{\topsep}{0pt}
 \setlength{\itemsep}{0pt}}}
\newcommand{\bln}{\begin{list}{$($\arabic{cnt3}$)$}
 {\usecounter{cnt3} \setlength{\topsep}{0pt}
 \setlength{\itemsep}{0pt}}}
\newcommand{\blR}{\begin{list}{$($\Roman{cnt4}$)$}
 {\usecounter{cnt4} \setlength{\topsep}{0pt}
 \setlength{\itemsep}{0pt}}}
\newcommand{\el}{\end{list}}
\begin{document}

\title{On uniform Mazur intersection property}

\author[Bandyopadhyay]{Pradipta Bandyopadhyay}
\address[Pradipta Bandyopadhyay]{Stat--Math Division,
Indian Statistical Institute, 203, B.~T. Road, Kolkata
700108, India.}
\email{pradipta@isical.ac.in}

\author[Ganesh]{Jadav Ganesh}
\address[Jadav Ganesh]{Dept of Mathematics, SLABS, SRM University, Neerukonda, Mangalagiri Mandal, Guntur Dist (AP), India-522502}
\email{ganesh.j@srmap.edu.in}

\author[Gothwal]{Deepak Gothwal}
\address[Deepak Gothwal]{Stat--Math Division, Indian Statistical Institute, 203, B.~T. Road, Kolkata
700108, India.}
\email{deepakgothwal190496@gmail.com}

\subjclass[2010]{46B20 ~\hfill \emph{To appear Studia Math.}}


\dedicatory{Dedicated to Professor Bor-Luh (Peter) Lin on his 85th birthday!}

\keywords{Mazur Intersection Property, w*-(semi)denting points.}

\begin{abstract}
In this paper, we show that a Banach space $X$ has the Uniform Mazur Intersection Property (UMIP) if and only if every $f \in S(X^*)$ is uniformly w*-semidenting point of $B(X^*)$. We also prove analogous results for uniform w*-MIP.
\end{abstract}

\maketitle

\section{Introduction}

The Mazur Intersection Property (MIP) --- every closed bounded convex set is the intersection of closed balls containing it --- is an extremely well studied property in Banach space theory. MathSciNet search with keywords ``Mazur('s) Intersection Property'' returns 53 hits. The paper \cite{GJM} is an excellent survey of the MIP, its generalisations and variants. In particular, one should mention \cite{GGS}, where a complete characterisation was obtained, most well-known one stating that the w*-denting points of $B(X^*)$ are norm dense in $S(X^*)$. They also considered the property in dual spaces that every w*-compact convex set is the intersection of balls (w*-MIP). In \cite{CL}, Chen and Lin introduced the notion of w*-semidenting points and showed that a Banach space $X$ has the MIP if and only if every $f \in S(X^*)$ is a w*-semidenting point of $B(X^*)$. Among subsequent papers, we must mention~\cite{Gi}.

A much less studied uniform version of the MIP (UMIP or UI) was introduced by Whitfield and Zizler \cite{WZ}. MathSciNet finds only one citation to \cite{WZ}, namely, \cite{GJM}, in the last 30+ years. Characterisations similar to \cite{GGS} were also obtained, but an analogue of the w*-denting point characterisation was missing, which perhaps is a reason for its being less pursued.

In this paper, we show that a Banach space $X$ has the UMIP if and only if every $f \in S(X^*)$ is uniformly w*-semidenting point of $B(X^*)$, thus filling a long felt gap. In the process, we also present simpler proofs of some characterisations in \cite{WZ}. We also introduce a w*-version of UMIP in the spirit of \cite{GGS} and obtain similar characterisations.

We hope that our result will also be a small step towards answering the long standing open question whether the UMIP implies that the space is super-reflexive.

\section{Characterisation of UMIP}

We consider real Banach spaces only. Let $X$ be a Banach space.
For $x\in X$ and $r>0$, we denote by $B(x, r)$ \emph{the open ball} $\{y\in X : \|x-y\|<r\}$ and by $B[x, r]$ \emph{the closed ball} $\{y\in X: \|x-y\|\leq r\}$. We denote by $B(X)$ the \emph{closed unit ball} $\{x\in X : \|x\| \leq 1\}$ and by $S(X)$ the \emph{unit sphere} $\{x \in X : \|x\| = 1\}$.

For $x\in S(X)$, we denote by $D(x)$ the set $\{f\in S(X^*) :
f(x)=1\}$. Any selection of $D$ is called a support mapping.

For a bounded set $C \subseteq X$ and $x \in X$, let \bla
\item $d(x, C) = \inf\{\|x-z\| : z \in C\}$, the distance function; and
\item $diam(C) = \sup\{\|x-y\| : x, y \in C\}$, the diameter of $C$.
\el

\begin{definition} \cite{WZ} \rm
We say that a Banach space $X$ has the Uniform Mazur Intersection Property (UMIP) if for every $\e >0$ and $M(\e) \geq 2$, there is $K(\e) > 0$ such that whenever a closed convex set $C\subseteq X$ and a point $p\in X$ are such that $diam(C) \leq M(\e)$ and $d(p, C) \geq \e$, there is a closed ball $B \subseteq X$ of radius $\leq K(\e)$ such that $C \subseteq B$ and $d(p, B) \geq \e/2$.
\end{definition}

\begin{remark} \rm
In the original definition in \cite{WZ}, $M(\e) = 1/\e$.
\end{remark}

\begin{definition} \rm
A \emph{slice} of $B(X)$ determined by $f\in S(X^*)$ is a set of the form
\[
S(B(X), f, \delta) := \{x\in B(X) : f(x) > 1-\delta \}
\]
for some $0<\delta < 1$. For $x \in S(X)$, $S(B(X^*), x, \delta)$ is called a w*-slice of $B(X^*)$.

We say that $x\in S(X)$ is a \emph{denting point} of $B(X)$ if
for every $\e > 0$, $x$ is contained in a slice of $B(X)$
of diameter less than $\e$.

A w*-denting point of $B(X^*)$ is defined similarly.
\end{definition}

For w*-semidenting points, we use an equivalent definition.
\begin{definition} \rm
$f\in S(X^*)$ is said to be a \emph{w*-semidenting point} of $B(X^*)$ if for every $\e > 0$, there exists a w*-slice
$S(B(X^*), x, \delta) \subseteq B(f, \e)$.

A semidenting point of $B(X)$ can be defined similarly.
\end{definition}

We begin with a variant of Phelps' Parallel Hyperplane Lemma \cite[Lemma 2.1]{Gi}. We include the well-known proof as we will use a step of the proof later.

\begin{lemma} \label{lem2}
For a normed linear space $X$, if for $f, g \in S(X^*)$ and $\e > 0$, $\{x \in B(X) : f(x) > \e\} \subseteq \{x \in X : g(x) > 0\}$, then $\|f-g\| < 2\e$.
\end{lemma}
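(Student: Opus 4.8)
The plan is to convert the geometric hypothesis into an analytic statement about the size of $f$ on the hyperplane $\ker g$, and then use a norm-preserving Hahn--Banach extension to compare $f$ with a scalar multiple of $g$. Throughout I would use $\|f-g\|=\sup_{x\in B(X)}(f-g)(x)$. The first step is to reformulate the inclusion by contraposition: every $x\in B(X)$ with $g(x)\le 0$ satisfies $f(x)\le\e$. Applying this to both $y$ and $-y$ for an arbitrary $y\in\ker g\cap B(X)$ (so that $g(\pm y)=0$) yields $|f(y)|\le\e$, and hence $\|f|_{\ker g}\|\le\e$, where $f|_{\ker g}$ is the restriction of $f$ to the closed, codimension-one subspace $\ker g$.

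Next I would extend $f|_{\ker g}$ by Hahn--Banach to a functional $\tilde f\in X^*$ with $\|\tilde f\|=\|f|_{\ker g}\|\le\e$. Since $f-\tilde f$ annihilates $\ker g$, and the functionals annihilating a codimension-one subspace are exactly the scalar multiples of $g$, we obtain $f-\tilde f=\lambda_0 g$ for some $\lambda_0\in\R$, so that $\|f-\lambda_0 g\|=\|\tilde f\|\le\e$. It is this norm-preserving extension, rather than an ad hoc projection along a vector norming $g$, that is essential for the sharp constant. The reverse triangle inequality then gives
\[
\bigl|\,|\lambda_0|-1\,\bigr|=\bigl|\,\|\lambda_0 g\|-\|f\|\,\bigr|\le\|f-\lambda_0 g\|\le\e ,
\]
so $\lambda_0$ is already within $\e$ of $\pm 1$.

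The delicate point, and the step that pins down the constant $2$ rather than a weaker bound, is fixing the sign of $\lambda_0$: I must rule out $f\approx -g$. To do this I would choose $u\in S(X)$ with $f(u)$ close to $1$; for $\e<1$ such $u$ lies in the cap $\{x\in B(X):f(x)>\e\}$, hence $g(u)>0$ by hypothesis. Feeding $u$ into $(f-\lambda_0 g)(u)\le\e$ gives $\lambda_0 g(u)\ge f(u)-\e>0$, whence $\lambda_0>0$, and therefore $|\lambda_0-1|\le\e$. Combining the two estimates,
\[
\|f-g\|\le\|f-\lambda_0 g\|+|\lambda_0-1|\,\|g\|\le\e+\e=2\e .
\]
The strict inequality $\|f-g\|<2\e$ I would then extract from the fact that the inclusion is into the \emph{open} half-space $\{g>0\}$, which leaves slack in the above (the equality case $\|f-g\|=2\e$ would force the triangle inequality to be tight while simultaneously saturating both $\|f|_{\ker g}\|=\e$ and $|\lambda_0-1|=\e$, which the open inclusion precludes). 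The main obstacle I anticipate is precisely controlling $\lambda_0$ — securing its positivity and thereby the sharp factor $2$ — since a careless choice of the approximating multiple of $g$ degrades the estimate to $4\e$.
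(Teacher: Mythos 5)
Your core argument is exactly the paper's proof of Lemma~\ref{lem2}: contraposition gives $\|f|_{\ker g}\|\le\e$, a norm-preserving Hahn--Banach extension $\tilde f$ of $f|_{\ker g}$ gives $f-\tilde f=\lambda_0 g$ with $\|f-\lambda_0 g\|\le\e$, evaluating at a point of the cap $\{x\in B(X):f(x)>\e\}$ forces $\lambda_0>0$, and the reverse triangle inequality plus the triangle inequality yield $\|f-g\|\le\|f-\lambda_0 g\|+|1-\lambda_0|\le 2\e$. Up to the order in which the sign of $\lambda_0$ and the bound $|1-\lambda_0|\le\e$ are obtained, this is the same proof, and that part is correct.

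The gap is your final step, where you claim the strict inequality $\|f-g\|<2\e$ can be extracted because the inclusion is into the \emph{open} half-space $\{g>0\}$. That claim is false, and no argument can close this gap, because equality can actually occur. Take $X=\ell_1^2$, so $X^*=\ell_\infty^2$, and let $\e=1/2$, $f=(1,0)$, $g=(1,1)$, both in $S(X^*)$. If $x=(x_1,x_2)\in B(X)$ satisfies $f(x)=x_1>1/2$, then $|x_2|\le 1-x_1<1/2$, hence $g(x)=x_1+x_2>0$, so the hypothesis holds; yet $\|f-g\|=\|(0,-1)\|_\infty=1=2\e$. In this example one can take $\tilde f=(1/2,-1/2)$ and $\lambda_0=1/2$, and then $\|f|_{\ker g}\|=\|f-\lambda_0 g\|=|1-\lambda_0|=\e$: both estimates saturate simultaneously despite the openness of the half-space, which refutes your parenthetical argument that the open inclusion precludes saturation. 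You are in good company here: the paper states the lemma with $<2\e$, but its own proof, like yours, only delivers $\le 2\e$. The discrepancy is harmless, since in the one place the lemma is invoked (Theorem~\ref{thm2}, $(a)\Ra(b)$) the non-strict bound still yields $\|f-g\|\le 2\e/3+\e/(9K+\e)<\e$ because $K\ge 1$; but your proof, as written, is entitled to claim only $\|f-g\|\le 2\e$.
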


\begin{proof}
By the given condition, if $x \in B(X)$ and $g(x) = 0$, then
$|f(x)| \leq \e$. That is $\|f|_{\ker(g)}\| \leq \e$. By
Hahn-Banach Theorem, there exists $h \in X^*$ such that $\|h\|
\leq \e$ and $h \equiv f$ on $\ker(g)$. It follows that $f-h = tg$ for some $t \in \R$. Then $\|f-tg\| = \|h\| \leq \e$. Now, if $y \in \{x \in B(X) : f(x) > \e\}$, then $g(y) > 0$ and
\[
\e < f(y) \leq (f-tg)(y) + t g(y) \leq \|f-tg\| + tg(y) \leq
\e + tg(y).
\]
It follows that $t > 0$. And
\[|1-t| \leq \big|\|f\|-\|tg\|\big| \leq \|f-tg\| \leq \e.\]
Thus,
\[\|f-g\| \leq \|f-tg\|+|1-t| \leq \e + \e = 2\e.\]
\end{proof}

The theorem below is equivalent to \cite[Theorem 2.1]{CL1} with a much simpler proof.

\begin{theorem} \label{thm1}
Let $X$ be a Banach space. Let $A \subseteq X^{**}$ be a bounded set. Then there exists a closed ball $B^{**} \subseteq X^{**}$ with centre in $X$ such that $A \subseteq B^{**}$ and $0 \notin B^{**}$ if and only if $d (0, A) > 0$ and there is a w*-slice of $B(X^*)$ contained in
\[
\{f \in B(X^*) : x^{**}(f) > 0 \mbox{ for all } x^{**} \in A\}.
\]
\end{theorem}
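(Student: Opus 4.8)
The plan is to prove both implications directly from the definitions, using (the proof of) Lemma~\ref{lem2} as the engine for the harder direction. Throughout I regard $x\in S(X)$ as an element of $X^{**}$, so that a w*-slice $S(B(X^*),x,\delta)=\{f\in B(X^*):x(f)>1-\delta\}$ is exactly a slice of $B(X^*)$ cut off by the functional $x$, and I write $P:=\{f\in B(X^*):x^{**}(f)>0 \text{ for all } x^{**}\in A\}$ for the target set.

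For the forward implication, suppose $A\subseteq B^{**}=B[x_0,r]$ with $x_0\in X$ and $0\notin B^{**}$, so that $\|x_0\|>r$ and $\|x^{**}-x_0\|\le r$ for every $x^{**}\in A$. The distance estimate is immediate: $\|x^{**}\|\ge\|x_0\|-\|x^{**}-x_0\|\ge\|x_0\|-r>0$, so $d(0,A)\ge\|x_0\|-r>0$. For the slice, I would put $x:=x_0/\|x_0\|$ and $\delta:=1-r/\|x_0\|\in(0,1)$. If $f\in B(X^*)$ lies in $S(B(X^*),x,\delta)$ then $f(x_0)=\|x_0\|f(x)>r$, and hence for each $x^{**}\in A$, $x^{**}(f)=f(x_0)+(x^{**}-x_0)(f)\ge f(x_0)-\|x^{**}-x_0\|>r-r=0$; thus this w*-slice sits inside $P$.

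The substance is the converse, and the key idea is to apply Lemma~\ref{lem2} in the space $X^*$ to \emph{each} point of $A$ separately. Fix $x^{**}\in A$ and note $\|x^{**}\|\ge\rho:=d(0,A)>0$. Applying the hypothesis with the single functional $x^{**}$ gives $\{f\in B(X^*):x(f)>1-\delta\}\subseteq\{f:x^{**}(f)>0\}$, which is precisely the inclusion in Lemma~\ref{lem2} with $\e=1-\delta$, $F=x$ and $G=x^{**}/\|x^{**}\|$ viewed as norm-one functionals on $X^*$. Rather than quote the conclusion $\|F-G\|<2\e$ (too weak here), I would extract the intermediate step of that proof: it produces a scalar $\lambda>0$ with $\|F-\lambda G\|\le\e$, i.e.\ a number $\mu=\mu(x^{**})>0$ such that $\|\mu x^{**}-x\|\le 1-\delta$. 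Geometrically this already places $x^{**}$ in the ball $B[x/\mu,(1-\delta)/\mu]$, whose centre lies on the ray through $x$ and which misses $0$ because $\delta>0$.

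The remaining, and main, obstacle is that $\mu$ depends on $x^{**}$, so these per-point balls must be merged into one ball covering all of $A$; here both hypotheses earn their keep. From $\|\mu x^{**}-x\|\le 1-\delta$ and $\|x\|=1$ I get $\delta\le\mu\|x^{**}\|\le 2-\delta$, and then the bounds $\rho\le\|x^{**}\|\le R:=\sup_{x^{**}\in A}\|x^{**}\|$ confine $\mu$ to a fixed interval $[\mu_-,\mu_+]$ with $\mu_-=\delta/R>0$ and $\mu_+=(2-\delta)/\rho<\infty$ --- this is exactly where $d(0,A)>0$ (finiteness of $\mu_+$) and boundedness of $A$ (positivity of $\mu_-$) are used. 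I would then centre a single ball far out on the ray, at $x_0:=tx$ with $t\ge 1/\mu_-$: for each $x^{**}\in A$, dividing by $\mu$ and using the triangle inequality,
\[
\|x^{**}-tx\|\le\frac{1-\delta}{\mu}+\Big(t-\frac1\mu\Big)=t-\frac{\delta}{\mu}\le t-\frac{\delta}{\mu_+}.
\]
Setting $r:=t-\delta/\mu_+$ (positive once $t$ is large) gives $A\subseteq B[tx,r]$ with $\|tx\|=t>r$, so $0\notin B[tx,r]$, completing the construction. I expect the only delicate points to be the passage from the slice condition to the quantitative estimate $\|\mu x^{**}-x\|\le 1-\delta$ (handled by reading off the Hahn--Banach step in the proof of Lemma~\ref{lem2}) and the monotone bookkeeping with $\mu_-,\mu_+$ that makes a single radius serve the whole family.
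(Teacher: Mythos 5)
Your proposal is correct and takes essentially the same route as the paper: the forward direction is the paper's argument verbatim, and in the converse you extract the same intermediate step of Lemma~\ref{lem2} (a scalar $t>0$ with $\|x - t\,x^{**}/\|x^{**}\|\| \leq 1-\delta$) and then centre a single ball far out on the ray through $x$, with the same threshold (your $t \geq R/\delta$ is the paper's $\lambda \geq M/(1-\varepsilon)$ under $\varepsilon = 1-\delta$) and in effect the same radius $t - \delta\rho/(2-\delta)$. The only cosmetic difference is that you re-derive the two-sided bound $\delta \leq \mu\|x^{**}\| \leq 2-\delta$ by the triangle inequality instead of quoting the estimate $|1-t|\leq\varepsilon$ already recorded in the proof of Lemma~\ref{lem2}.
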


\begin{proof}
Let $A \subseteq B^{**}[x_0, r]$ and $0 \notin B^{**}[x_0, r]$. Then $\|x_0\| > r$. Clearly, $d(0, A) \geq \|x_0\| - r > 0$.

Let $S = \{f \in B(X^*) : f(x_0) > r\}$. Then $S$ is a w*-slice of $B(X^*)$. And if $g \in S$, then for any $x^{**} \in A$,
\[g(x_0 - x^{**}) \leq \|x_0 - x^{**}\| \leq r\]
and hence,
\[x^{**}(g) \geq  g(x_0) - r > 0.\]
Thus,
\[
S \subseteq \{f \in B(X^*) : x^{**}(f) > 0 \mbox{ for all } x^{**} \in A\}.
\]

Conversely, let $d = d(0, A) > 0$ and let $x_0 \in S_X$ and
$0 < \e < 1$ be such that
\[
\{f \in B(X^*) : f(x_0) > \e\} \subseteq \{f \in B(X^*) : x^{**}(f) > 0 \mbox{ for all } x^{**} \in A\}.
\]
Let $M = \sup \{\|x^{**}\| : x^{**} \in A\}$. By the proof of Lemma~\ref{lem2}, for all $x^{**} \in A$, there exists $t \in \R$ such that $1-\e \leq t \leq 1+\e$ and
\[
\left\|\frac{t x^{**}}{\|x^{**}\|} - x_0\right\| \leq \e.
\]
Then for $\lambda \geq M/(1-\e)$,
\beqa
\|x^{**}- \lambda x_0\| & \leq & \left\|x^{**}-  \frac{\|x^{**}\|}{t} x_0\right\| + \left|\frac{\|x^{**}\|}{t} - \lambda\right| \leq \frac{\e \|x^{**}\|}{t} + \lambda - \frac{\|x^{**}\|}{t}\\
& = & \lambda - \frac{\|x^{**}\|}{t}(1-\e) \leq \lambda - \frac{d(1-\e)}{1+\e}.
\eeqa
Therefore,
\[A \subseteq B^{**}\left[\lambda x_0, \lambda - \frac{d(1-\e)}{1+\e}\right] \mbox{ and clearly, }
0 \notin B^{**}\left[\lambda x_0, \lambda - \frac{d(1-\e)}{1+\e}\right].
\]
\end{proof}

\begin{remark} \rm
From Theorem~\ref{thm1}, it can be shown that if every $f \in S(X^*)$ is a w*-semidenting point of $B(X^*)$ then $X$ has the MIP. See Theorem~\ref{thm2} $(b) \Ra (a)$ below.
\end{remark}

\begin{definition} \rm
For $\e, \delta>0$ and $ x\in S(X) $, denote by
\beqa
d_1(x, \delta) & = & \sup \limits_{0< \lambda < \delta, \ y \in B(X)} \frac{\|{x+\lambda y}\| +\|{x-\lambda y}\|- 2}{\lambda}, \\
d_2(x, \delta) & = & diam(S(B(X^*), x, \delta)),\\
d_3(x, \delta) & = & diam (D(S(X)\cap B (x, \delta))).
\eeqa
\end{definition}

\begin{definition} \cite{WZ} \rm For $\e, \delta > 0$, define
\[
M_{\e, \delta}(X) = \{x\in S(X) :  \sup_{0< \|y\| < \delta} \frac{\|{x + y}\| + \|{x - y}\|- 2}{\|y\|} < \e\}.
\]
In other words, $M_{\e, \delta}(X) = \{x\in S(X) : d_1(x, \delta )< \e\}$.
\end{definition}

The lemma below is quantitatively more precise than \cite[Lemma 1]{WZ}. It follows from \cite[Lemma 2.1]{Ba}. We include the details for completeness.

\begin{lemma} \label{lem1}
For any $\alpha, \delta > 0$, we have \blr 
\item $\ds d_2(x, \alpha )\leq d_1(x, \delta )+\frac{2\alpha}{\delta}$
\item $d_3(x, \delta)\leq d_2(x, \delta )$
\item $d_1(x, \delta)\leq d_3(x, 2{\delta})$.
\el
\end{lemma}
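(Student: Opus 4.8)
The plan is to establish the three estimates separately, in each case exploiting the duality between the w*-slice $S(B(X^*),x,\cdot)$, the support functionals gathered in $D(\cdot)$, and the smoothness quantity $d_1$.

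For (i), I would fix $f,g \in S(B(X^*),x,\alpha)$, a direction $y \in B(X)$, and a scale $0<\lambda<\delta$. Since $\|f\|,\|g\|\le 1$ we have $f(x+\lambda y)\le \|x+\lambda y\|$ and $g(x-\lambda y)\le\|x-\lambda y\|$, while $f(x),g(x)>1-\alpha$ by the slice condition. Writing $f(x+\lambda y)=f(x)+\lambda f(y)$ and $g(x-\lambda y)=g(x)-\lambda g(y)$ and adding the two inequalities gives $\lambda(f-g)(y) < \|x+\lambda y\|+\|x-\lambda y\|-2+2\alpha$. Dividing by $\lambda$, bounding the first quotient by $d_1(x,\delta)$, and then letting $\lambda \to \delta^-$ yields $(f-g)(y)\le d_1(x,\delta)+2\alpha/\delta$; taking the supremum over $y\in B(X)$ and then over $f,g$ gives (i).

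For (ii), I would simply verify the set inclusion $D(S(X)\cap B(x,\delta))\subseteq S(B(X^*),x,\delta)$, after which monotonicity of the diameter under inclusion finishes the argument. Indeed, if $z\in S(X)$ with $\|z-x\|<\delta$ and $f\in D(z)$, then $f(x)=f(z)+f(x-z)=1+f(x-z)\ge 1-\|x-z\|>1-\delta$, so $f$ lies in the w*-slice.

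Part (iii) is the substantive one. I would fix $0<\lambda<\delta$ and $y\in B(X)$; the triangle inequality gives $\|x+\lambda y\|+\|x-\lambda y\|\ge 2\|x\|=2$, so the numerator is nonnegative, and if either norm vanishes it is in fact $0$, so I may assume $\|x\pm\lambda y\|>0$. Put $u=(x+\lambda y)/\|x+\lambda y\|$ and $v=(x-\lambda y)/\|x-\lambda y\|$ and choose $f\in D(u)$, $g\in D(v)$. Then $f(x+\lambda y)=\|x+\lambda y\|$ and $g(x-\lambda y)=\|x-\lambda y\|$, and using $f(x),g(x)\le 1$ I get $\|x+\lambda y\|+\|x-\lambda y\|-2\le \lambda(f-g)(y)\le \lambda\|f-g\|$. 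The key step --- and the only place the factor $2$ in $d_3(x,2\delta)$ is used --- is to check that $u,v\in S(X)\cap B(x,2\delta)$: since $\big|1-\|x+\lambda y\|\big|\le\|\lambda y\|=\lambda\|y\|$, one has $\|u-x\|\le\|u-(x+\lambda y)\|+\lambda\|y\|=\big|1-\|x+\lambda y\|\big|+\lambda\|y\|\le 2\lambda\|y\|\le 2\lambda<2\delta$, and symmetrically for $v$. Hence $f,g\in D(S(X)\cap B(x,2\delta))$, so $\|f-g\|\le d_3(x,2\delta)$, and dividing by $\lambda$ and taking the supremum over $\lambda$ and $y$ gives (iii). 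I expect the main obstacle to be precisely this normalization estimate: controlling the distance from $x$ to the renormalized perturbed points, where the radial correction and the perturbation each contribute $\lambda\|y\|$, which is exactly what forces the doubled radius.
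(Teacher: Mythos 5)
Your proposal is correct and follows essentially the same route as the paper: in (i) you pair slice functionals against $x\pm\lambda y$ (the paper does the same via approximating sequences $f_n, g_n, y_n$), (ii) is the identical inclusion $D(S(X)\cap B(x,\delta))\subseteq S(B(X^*),x,\delta)$, and (iii) uses exactly the paper's normalization estimate $\|\frac{x\pm\lambda y}{\|x\pm\lambda y\|}-x\|\leq 2\lambda$ to place the support functionals in $D(S(X)\cap B(x,2\delta))$. The only (minor) additions are your explicit treatment of the degenerate case $\|x\pm\lambda y\|=0$ and the limit $\lambda\to\delta^-$, both of which the paper leaves implicit.
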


\begin{proof}
$(i)$. Fix $\alpha$, $\delta>0$. Let $d_2 = d_2 (x, \alpha)$. For each $n \geq 1$, we can choose $f_n$, $g_n \in S(B(X^*), x, \alpha)$ such that \mbox{$\|f_n - g_n\| > d_2 - \frac{1}{n}$.} Choose $y_n \in B(X)$ such that $(f_n - g_n)(y_n) > d_2 - \frac{1}{n}$. Then
\[
\frac{\|x + \delta y_n\| + \|x - \delta y_n\| - 2}{\delta} \geq \frac{f_n(x + \delta y_n) + g_n(x - \delta y_n) - 2}{\delta} \geq d_2 - \frac{1}{n} - \frac{2\alpha}{\delta}.
\]
Thus, $d_1 (x, \delta)\geq d_2 - \frac{2\alpha}{\delta}$.

$(ii)$. Let $y \in S(X) \cap B(x, \delta)$ and $f_{y}\in D(y)$. Then,
\[0 \leq 1 - f_y(x) = f_y(y-x) \leq \|y-x\| < \delta.\]
Thus, $f_y \in S(B(X^*), x, \delta)$.
Therefore, $D(S(X)\cap B(x, \delta)) \subseteq S(B(X^*), x, \delta)$.

$(iii)$. Let $\lambda \leq \delta$. Observe that for any $y \in B(X)$,
\beqa
\left\|\frac{x \pm \lambda y}{\|x \pm \lambda y\|} - x \right\| & \leq & \left\|\frac{x \pm \lambda y}{\|x \pm \lambda y\|} - (x \pm \lambda y) \right\| + \lambda = \big|1 - \|x \pm \lambda y\|\big| + \lambda \\
& = & \big|\|x\|-\|x \pm \lambda y\|\big| + \lambda \leq 2\lambda.
\eeqa

Let $\ds f \in D\left(\frac{x + \lambda y}{\|x + \lambda y\|} \right)$ and $\ds g \in D\left(\frac{x - \lambda y}{\|x - \lambda y\|}\right)$. Then
\beqa
\frac{\|x + \lambda y\| + \|x - \lambda y\| - 2}{\lambda} &=& \frac{f(x + \lambda y) + g(x - \lambda y) - 2}{\lambda} \\ &\leq & (f-g)(y) \leq \|f-g\|.
\eeqa
\end{proof}

We now come to our main theorem. The conditions $(c)$--$(e)$ are reformulations of $(ii)$--$(iv)$ of \cite[Theorem 1]{WZ} in the language of \cite[Theorem 2.1]{GGS}. We give a self-contained proof of $(a) \Lra (b)$, our main result. The proofs of $(c) \Ra (e)$ and $(c) \Lra (d)$ are essentially from \cite[Theorem 1]{WZ}, which we include for the sake of completeness and we will need them in the next section.
For the rest, our arguments are either new or simpler.

\begin{theorem} \label{thm2}
For a Banach space $X$, the following are equivalent~:
\bla
\item $X$ has the UMIP.
\item Every $f \in S(X^*)$ is uniformly w*-semidenting, i.e., given $\e > 0$, there exists $0 < \delta < 1$ such that for any $f \in S(X^*)$, there exists $x \in S(X)$ such that
    \[
    S(B(X^*), x, \delta) \subseteq B(f, \e).
    \]
\item The duality map is uniformly quasicontinuous, i.e., given $\e > 0$, there exists $\delta > 0$ such that for any $f \in S(X^*)$, there exists $x \in S(X)$ such that $D(S(X) \cap B(x, \delta)) \subseteq B(f, \e)$.
\item Given $\e >0 $ there exists $\delta >0$ such that every support mapping maps a $\delta$-net in $S(X)$ to an $\e$-net in $S(X^*)$.
\item Given $\e >0 $ there exists $\delta >0$ such that for every $f\in S(X^*)$, there exists $x \in M_{\e, \delta}(X)$ such that $D(x) \subseteq B(f, \e)$.
\el
\end{theorem}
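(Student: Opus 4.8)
The plan is to prove the central equivalence $(a)\Leftrightarrow(b)$ from scratch and then to tie $(b)$ to $(c)$ and $(e)$ through the three comparison inequalities of Lemma~\ref{lem1}, treating $(c)\Leftrightarrow(d)$ as in Whitfield--Zizler. The common engine for $(a)\Leftrightarrow(b)$ is Theorem~\ref{thm1}, read as a dictionary between closed balls of $X$ with centre in $X$ and w*-slices of $B(X^*)$, together with the parallel hyperplane estimate Lemma~\ref{lem2}.

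For $(a)\Rightarrow(b)$, fix $\e>0$ and $f\in S(X^*)$ and apply the UMIP to the cap $C=\{y\in B(X): f(y)\ge\e/3\}$ with $p=0$. Since $\|y\|\ge f(y)\ge\e/3$ on $C$ we have $d(0,C)\ge\e/3$ while $diam(C)\le2$, so UMIP yields a ball $B[x_0,r]$ with $C\subseteq B[x_0,r]$, $r\le K(\e/3)$ and $d(0,B)\ge\e/6$; then $\|x_0\|\le r+1$ is bounded by a function of $\e$, and $0\notin B$. Put $x=x_0/\|x_0\|$. For every $g$ in the w*-slice $\{g\in B(X^*): g(x_0)>r\}$ and every $y\in B(X)$ with $f(y)>\e/3$ one has $g(y)=g(x_0)-g(x_0-y)>r-\|x_0-y\|\ge0$, so $\{y\in B(X): f(y)>\e/3\}\subseteq\{y: g(y)>0\}$, and Lemma~\ref{lem2} applied to $f$ and $g/\|g\|$ gives $\|f-g/\|g\|\|<2\e/3$. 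Since $\|g/\|g\|-g\|=1-\|g\|$, taking the slice of width $\delta=\min(\delta_*,\e/3)$, where $\delta_*=(\e/6)/(K(\e/3)+1)$ bounds $d(0,B)/\|x_0\|$ from below uniformly in $f$, forces $\|f-g\|<\e$, i.e. $S(B(X^*),x,\delta)\subseteq B(f,\e)$, which is $(b)$.

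For $(b)\Rightarrow(a)$, given $\e>0$ and $M(\e)$, take $C$ convex with $diam(C)\le M(\e)$ and, after translating, $d(0,C)=d\ge\e$. If $d\ge M(\e)+\e/2$ a ball of radius $M(\e)$ about any point of $C$ already separates; otherwise $M_0=\sup_{c\in C}\|c\|\le 2M(\e)+\e$. Choose $f\in S(X^*)$ with $\inf_C f=d$ and feed it to $(b)$: for a target $\e''$ there is $x\in S(X)$ whose w*-slice lies in $B(f,\e'')$, so $x$ nearly norms $f$ and, by Lemma~\ref{lem1}(iii), $d_1(x,\delta'/2)\le 2\e''$. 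Take $B[\lambda x,\rho]$ with $\rho=\sup_{c\in C}\|\lambda x-c\|$; for $\lambda>2M_0/\delta'$ the smoothness bound yields $\|\lambda x-c\|\le\lambda(1+\e'')+2\e''M_0-f(c)$, hence $\rho\le\lambda(1+\e'')+2\e''M_0-d$, so $C\subseteq B$, $0\notin B$, the radius is controlled by $\e$, and the margin $\lambda-\rho\ge d-\e''(\lambda+2M_0)\ge\e/2$. The genuinely delicate point, and the main obstacle, is exactly this simultaneous choice of $\e''$, of the width $\delta'=\delta(\e'')$ it produces, and of the dilation $\lambda$: the estimate is valid only for $\lambda>2M_0/\delta'$ while the margin demands $\e''(\lambda+2M_0)\le\e/2$, so one must make $\e''$ small relative to $\delta'$, and it is here that the uniformity in $f$ granted by $(b)$ is indispensable (note that Theorem~\ref{thm1} alone, which is tuned to the MIP where only $0\notin B$ is needed, does not by itself deliver the quantitative margin $\e/2$).

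The remaining equivalences are uniform in $f$ and rest only on Lemma~\ref{lem1} together with the containment $D(x)\subseteq D(S(X)\cap B(x,\delta))\subseteq S(B(X^*),x,\delta)$ extracted from the proof of Lemma~\ref{lem1}(ii). Indeed $(b)\Rightarrow(c)$ holds with the same $x,\delta,\e$, since $D(S(X)\cap B(x,\delta))$ sits inside the slice; $(c)\Rightarrow(e)$ follows because the $x$ from $(c)$ has $d_3(x,\delta)\le2\e$, whence $d_1(x,\delta/2)\le2\e$ by (iii), so $x\in M_{2\e,\delta/2}(X)$ with $D(x)\subseteq B(f,\e)$; and $(e)\Rightarrow(b)$ follows because $d_1(x,\delta)<\e$ gives $d_2(x,\alpha)<2\e$ for $\alpha=\delta\e/2$ by (i), while $D(x)\subseteq S(B(X^*),x,\alpha)\cap B(f,\e)$ anchors the now small slice within $B(f,3\e)$ of $f$. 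With $(c)\Leftrightarrow(d)$ borrowed from Whitfield--Zizler, this closes the cycle and completes the equivalence.
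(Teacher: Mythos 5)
Your $(a)\Rightarrow(b)$ argument (cap $C=\{y\in B(X):f(y)\ge\e/3\}$, UMIP ball, renormalised centre, Lemma~\ref{lem2} plus the $1-\|g\|$ correction) and your treatment of $(b)\Rightarrow(c)\Rightarrow(e)\Rightarrow(b)$ and $(c)\Leftrightarrow(d)$ are correct and essentially the paper's own proofs, up to harmless constant bookkeeping. The problem is $(b)\Rightarrow(a)$: the point you flag as ``the genuinely delicate point'' is not merely delicate, it is impossible to resolve within your scheme. Your two requirements are $\lambda>2M_0/\delta'$ (so that $c/\lambda$ falls in the window of $d_1(x,\delta'/2)$) and $\e''(\lambda+2M_0)\le\e/2$ (the margin), which together force $\e''/\delta'(\e'')<\e/(4M_0)\le 1/4$. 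But no $\delta'$ admissible in $(b)$ can satisfy this: for any $g\in D(x)$ the segment $\{(1-s)g: 0\le s<\delta'\}$ lies in $S(B(X^*),x,\delta')$, so that slice has diameter at least $\delta'$, and its containment in $B(f,\e'')$ forces $\delta'\le 2\e''$, i.e.\ $\e''/\delta'\ge 1/2$ always. Consequently $\e''\lambda>2M_0\,\e''/\delta'\ge M_0\ge d$ (every point of $C$ has norm at least $d$), and your lower bound for the margin, $d-\e''(\lambda+2M_0)$, is \emph{always negative}; dropping the factor $2$ in the window does not help either, since then $\e''\lambda\ge d/2$ and the case $d=\e$ still fails. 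No appeal to the uniformity in $f$ can rescue this: slices trapped in small balls are necessarily narrow, so ``$\e''$ small relative to $\delta'$'' is self-contradictory.

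The paper closes exactly this gap by decoupling the $\e/2$ margin from the slice estimate. It fattens $C$ to $D=\overline{C+\frac{\e}{2}B(X)}$, separates to get $f\in S(X^*)$ with $\inf f(D)\ge\e/2$, and applies $(b)$ with the \emph{fixed} target $\e/4L$, $L=M(\e)+\e$ (so the target, hence $\delta$, depends only on $\e$); this places $S(B(X^*),x_0,\delta)$ inside $\{g\in B(X^*): g>0 \mbox{ on } D\}$. The proof of Theorem~\ref{thm1} (a Lemma~\ref{lem2}-type estimate, not the modulus $d_1$) then gives, for $\lambda\in[L/\delta,K]$, a ball $B[\lambda x_0,\,\lambda-d(0,D)\delta/(2-\delta)]\supseteq D$ whose margin over $0$ is positive but tiny --- and tiny is enough, because shrinking the radius by $\e/2$ yields a ball that still contains $C$ and misses $0$ by $\e/2+d(0,D)\delta/(2-\delta)\ge\e/2$. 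The quantitative margin comes from the fattening, not from the slice geometry; that is the missing idea in your argument.
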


\begin{proof}
$(a) \Ra (b)$. We use an idea from \cite[Lemma 2.2$(ii)$]{Gi}.

Let $0< \e < 1$ and $M(\e) \geq 2$ be given. Choose $K = K(\e)$ as given by $(a)$ for $\e/3$. We may assume w.l.o.g.\ that $K \geq 1$.

Let $f \in S(X^*)$. Consider $C := \{x\in B(X) : f(x) \geq \e/3\}$. Then, $d(0, C) \geq \e/3$. Also, $diam (C) \leq 2 \leq M(\e)$.

So, we have $B = B[x_0, r]$ containing $C$, $r \leq K$ and $d(0, B) \geq \e/6$.

Since $d(0, B[x_0, r]) \geq \e/6$, $\|x_0\| \geq r + \e/6 > r + \e/9$.

\textsc{Claim~:} $S =: S(B(X^*), x_0/\|x_0\|, 1 - K/(K+\e/9)) \subseteq B(f, \e)$, that is, $(b)$ holds for $x = x_0/\|x_0\|$ and $\delta = 1 - K/(K+\e/9)$.

Let $g \in S$. Since, $r/(r+\e/9) \leq K/(K+\e/9)$, $g(x_0/\|x_0\|) > r/(r+\e/9)$, which implies $g(x_0) > r\|x_0\|/(r+\e/9) > r$. That is, $\inf g(B) > 0$. So,
\[
\{x\in B(X) : f(x) > \e/3\} \subseteq C \subseteq B \subseteq \{x: g(x) > 0\}.
\]
By Lemma~\ref{lem2}, $\|f - g/\|g\|\| < 2\e/3$. Also, $g \in S$ implies $\|g\| \geq g(x_0/\|x_0\|) > K/(K+\e/9)$. Hence,
\[\|f - g\| \leq \|f- g/\|g\|\| + (1 - \|g\|) \leq 2\e/3 + \e/(9K+\e) < \e.
\]

This proves the claim.

$(b) \Ra (a)$.
Given $\e >0$ and $M(\e) \geq 2$, it suffices to show that there is $K(\e) > 0$ such that whenever a closed convex set $C \subseteq X$ is such that $diam(C) \leq M(\e)$ and $d(0, C) \geq \e$, there is a closed ball $B \subseteq X$ of radius $\leq K(\e)$ such that $C \subseteq B$ and $d(0, B) \geq \e/2$.

Let $\e>0$ be given. Let $L = M(\e) + \e$. Choose $0< \delta < 1$ for $\e/4L$ obtained from $(b)$. Let $K = L/\delta + 1$. We will show that this $K$ works.

\textsc{Case I~:} $C \setminus B[0, M(\e) + \e/2] \neq \emptyset$.

Choose $z \in C \setminus B[0, M(\e) + \e/2]$. Then $C \subseteq B[z, M(\e)]$, $d(0, B[z, M(\e)]) \geq \e/2$ and $M(\e) \leq K$.

\textsc{Case II~:} $C \subseteq B[0, M(\e) + \e/2]$.

Define $D = \overline{C + \frac{\e}{2} B(X)}$. Then $D \subseteq B[0, L]$ and $d(0, D) \geq \e/2$, and hence, $D$ is disjoint from $B(0, \e/2)$. By separation theorem, there exists $f \in S(X^*)$ such that $\inf f(D) \geq \e/2$. By choice of $\delta$, there exists $x_0 \in S(X)$ such that
\[
S(B(X^*), x_0, \delta) \subseteq B(f, \e/4L).
\]

It follows that if $g \in B(X^*) \cap B(f, \e/4L)$ and $z \in D$, then $g(z) \geq f(z) - \|f-g\| \|z\| \geq \e/2 - \e/4 = \e/4 > 0$. Therefore,
\beqa
S(B(X^*), x_0, \delta) & \subseteq & B(X^*) \cap B(f, \e/4L)\\ & \subseteq & \{g \in B(X^*) : g(x) > 0 \mbox{ for all } x \in D\}.
\eeqa

By the proof of Theorem~\ref{thm1}, for $K \geq \lambda \geq L/\delta$,
\[
D \subseteq B\left[\lambda x_0, \lambda - \frac{d(0, D)\delta}{2-\delta}\right].
\]

It follows that
\beqa
& \ds C \subseteq B = B\left[\lambda x_0, \lambda - \frac{\e}{2} - \frac{d(0, D)\delta}{2-\delta}\right], \quad \mbox{and}\\
& \ds d(0, B) = \frac{\e}{2} + \frac{d(0, D)\delta}{2-\delta} \geq \frac{\e}{2}.
\eeqa

This completes the proof.

$(b) \Ra (c)$. As observed in the proof of Lemma~\ref{lem1}$(ii)$, $D(S(X)\cap B(x, \delta)) \subseteq S(B(X^*), x, \delta)$.

$(c) \Ra (e)$. Let $\e >0 $ be given. Then from $(c)$ for $\e/3$, there exists $\delta > 0$ such that for any $f \in S(X^*)$, there exists $x \in S(X)$ such that $D(S(X) \cap B(x, 2\delta)) \subseteq B(f, \e/3)$. This implies $d_3(x, 2\delta) \leq 2\e/3 < \e$. By Lemma~\ref{lem1}$(iii)$, $d_1(x, \delta) < \e$, \emph{i.e.}, $x \in M_{\e, \delta}(X)$. And $D(x) \subseteq B(f, \e/3) \subseteq B(f, \e)$.

$(e) \Ra (b)$. Let $\e > 0$ be given. By $(e)$, there exists $\delta >0$ such that for every $f\in S(X^*)$, there is an $x\in M_{\e/4, \delta}(X)$ such that $D(x)\subseteq B(f, \e/4)$. So, $d_1(x, \delta) < \e/4$. By Lemma~\ref{lem1}$(i)$, $d_2(x, \alpha) < \e/2$, for $\alpha <\delta \e/8$. That is, $diam(S(B(X^*), x, \alpha)) < \e/2$. Now, $D(x) \subseteq S(B(X^*), x, \alpha)$. Hence for any $g \in S(B(X^*), x, \alpha)$ and $f_x \in D(x)$
\[
\|f-g\| \leq \|f - f_x\| + \|f_x - g\| \leq \e/4 + \e/2 < \e.
\]
Therefore,
\[
S(B(X^*), x, \alpha) \subseteq B(f, \e).
\]

$(c) \Ra (d)$ is obvious.

$(d) \Ra (c)$. If $(c)$ does not hold, there exists $\e > 0$ such that for all $\delta > 0$, there exists $f_\delta \in S(X^*)$ such that for all $x \in S(X)$, there exists $z_x \in S(X)$ and $f_{z_x} \in D(z_x)$ such that $\|z_x - x\| < \delta/2$ and $\|f_\delta -f_{z_x}\| \geq \e$. Then $\{z_x : x \in S(X)\}$ is a $\delta$-net, but $\{f_{z_x}: x \in S(X)\}$ is not an $\e$-net in $S(X^*)$.
\end{proof}

\section{Characterisation of w*-UMIP}

In this section, we study the w*-version of UMIP defined above.

\begin{definition} \rm
We say that a dual Banach space $X^*$ has the w*-Uniform Mazur Intersection Property (w*-UMIP) if for every $\e >0$ and $M(\e) \geq 2$, there is $K(\e) > 0$ such that whenever a w*-compact convex set $C\subseteq X^*$ and a point $f\in X^*$ are such that $diam(C) \leq M(\e)$ and $d(f, C) \geq \e$, there is a closed ball $B \subseteq X^*$ of radius $\leq K(\e)$ such that $C \subseteq B$ and $d(f, B) \geq \e/2$.
\end{definition}

For $f \in S(X^*)$, the inverse duality mapping is defined as
\[
D^{-1}(f) =
\left\{ \begin{array}{ll} \{x \in S(X) : f(x)=1\} & \mbox{ if \quad} f \in D(S(X)) \\ \emptyset & \mbox{ if \quad} f \notin D(S(X)) \end{array} \right.
\]
The fact that $D^{-1}(f) = \emptyset$ for some $f \in S(X^*)$ introduces some technicalities in the discussion of w*-UMIP.

\begin{definition} \rm
For $\e, \delta >0$ and $f \in S(X^*)$, denote by
\beqa
d^*_1(f, \delta) & = & \sup \limits_{0< \lambda < \delta, ~g \in B(X^*)} \frac{\|{f + \lambda g}\| + \|{f - \lambda g}\|- 2}{\lambda},\\
d^*_2(f, \delta) & = & diam(S(B(X), f, \delta)),\\
d^*_3(f, \delta) & = & diam (D^{-1}(D(S(X)) \cap B (f, \delta))).
\eeqa
\end{definition}

\begin{lemma} \label{lem3}
For any $\alpha, \delta > 0$, we have,
\begin{enumerate}[(i)]
\item $\ds d^*_2(f, \alpha) \leq d^*_1(f, \delta ) + \frac{2\alpha}{\delta}$
\item $d^*_3(f, \delta) \leq d^*_2(f, \delta)$
\item $\ds d^*_1(f, \alpha)\leq \frac{d^*_3(f, \delta)+2\alpha}{1-\alpha^2}$ if $\alpha < \sqrt{\delta+1} - 1$.
\end{enumerate}
\end{lemma}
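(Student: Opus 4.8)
The plan is to prove (i)--(iii) as the dual analogues of Lemma~\ref{lem1}, with the roles of $B(X)$ and $B(X^*)$ interchanged: now $f\in S(X^*)$ is the fixed functional, the test vectors $g$ range over $B(X^*)$, and support points are sought in $S(X)$ through the (possibly empty) inverse duality map $D^{-1}$. Parts (i) and (ii) should copy the corresponding steps of Lemma~\ref{lem1} essentially verbatim. For (i), writing $d_2^*=d_2^*(f,\alpha)$, I would for each $n$ pick $x_n,y_n\in S(B(X),f,\alpha)$ with $\|x_n-y_n\|>d_2^*-\frac1n$ and a norming $g_n\in B(X^*)$ with $g_n(x_n-y_n)>d_2^*-\frac1n$, and then estimate
\[
\frac{\|f+\delta g_n\|+\|f-\delta g_n\|-2}{\delta}\ge\frac{f(x_n)+f(y_n)-2+\delta g_n(x_n-y_n)}{\delta}\ge d_2^*-\frac1n-\frac{2\alpha}{\delta},
\]
using $f(x_n),f(y_n)>1-\alpha$, and let $n\to\infty$. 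For (ii) it suffices, exactly as in Lemma~\ref{lem1}(ii), to record the inclusion $D^{-1}(D(S(X))\cap B(f,\delta))\subseteq S(B(X),f,\delta)$: if $h\in D(S(X))\cap B(f,\delta)$ and $x\in D^{-1}(h)$, then $1-f(x)=(h-f)(x)\le\|h-f\|<\delta$, so $f(x)>1-\delta$; taking diameters gives (ii).

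Part (iii) is the substantial one. Fix $0<\lambda<\alpha$ and $g\in B(X^*)$, and put $s_\pm=\|f\pm\lambda g\|$. In the idealized situation the right objects are the normalized functionals $h_\pm=(f\pm\lambda g)/s_\pm\in S(X^*)$ together with support points $x_\pm\in D^{-1}(h_\pm)$. Their distance from $f$ is controlled by Phelps' lemma rather than by direct normalization: since $\{x\in B(X):f(x)>\lambda\}\subseteq\{x:(f+\lambda g)(x)>0\}=\{x:h_+(x)>0\}$, Lemma~\ref{lem2} gives $\|f-h_+\|<2\lambda$, and similarly for $h_-$, so both $h_\pm\in B(f,2\lambda)$. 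Granting $x_\pm\in D^{-1}(D(S(X))\cap B(f,\delta))$, the relations $h_\pm(x_\pm)=1$ and $f(x_\pm)\le1$ give $s_+-1\le\lambda g(x_+)$ and $s_--1\le-\lambda g(x_-)$, hence
\[
\frac{\|f+\lambda g\|+\|f-\lambda g\|-2}{\lambda}\le g(x_+-x_-)\le\|x_+-x_-\|\le d_3^*(f,\delta).
\]

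The main obstacle is precisely the one flagged after the definition of w*-UMIP: \emph{$f\pm\lambda g$ need not attain their norms}, so $h_\pm$ need not lie in $D(S(X))$ and $D^{-1}(h_\pm)$ may be empty, which is exactly what blocks the clean argument above. The resolution is to replace $f\pm\lambda g$ by nearby norm-attaining functionals (density of norm-attaining functionals, Bishop--Phelps), or equivalently to run the estimate with approximate maximizers and then pass to genuine support points; the price is paid only in the constants. The slack incurred when normalizing by $s_\pm\in[1-\lambda,1+\lambda]$ is what produces the factor $1/(1-\lambda^2)$ and the additive $2\lambda$, and the hypothesis $\alpha<\sqrt{\delta+1}-1$, i.e.\ $2\alpha+\alpha^2<\delta$, is exactly the room needed to keep all the perturbed functionals inside $B(f,\delta)$ (so that $\|x_+-x_-\|\le d_3^*(f,\delta)$ remains legitimate). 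Since $\lambda<\alpha$ and $(t+2\lambda)/(1-\lambda^2)$ is increasing in $\lambda$, taking the supremum over $\lambda$ and $g$ yields $d_1^*(f,\alpha)\le\frac{d_3^*(f,\delta)+2\alpha}{1-\alpha^2}$, as claimed.
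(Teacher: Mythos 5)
Parts (i) and (ii) are correct and coincide with the paper's own proof: the paper gets (i) by quoting Lemma~\ref{lem1}(i) (your $x_n, y_n, g_n$ argument is exactly the proof of that lemma transplanted to the dual pairing) and gets (ii) from precisely the inclusion $D^{-1}(D(S(X))\cap B(f,\delta))\subseteq S(B(X),f,\delta)$ that you record. For (iii) you also identify the paper's route --- the obstruction that $f\pm\lambda g$ need not attain their norms, repaired via Bishop--Phelps --- but the only estimate you actually carry out is the idealized case that you yourself concede may be vacuous (when $D^{-1}(h_\pm)=\emptyset$). The perturbed estimate, which is the entire content of (iii), is never executed: ``run the estimate with approximate maximizers\dots the price is paid only in the constants'' asserts the lemma rather than proving it. The stated bound $\frac{d_3^*+2\alpha}{1-\alpha^2}$ hinges on a specific quantitative choice you never make, namely that the norm-attaining approximants $h_1,h_2\in D(S(X))$ of the normalized functionals be chosen within $\lambda^2$ --- an error of order $o(\lambda)$, so that it survives division by $\lambda$ in the difference quotient. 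With $x_i\in D^{-1}(h_i)$ one then deduces $x_1(f+\lambda g)\ge(1-\lambda^2)\|f+\lambda g\|$ and $x_2(f-\lambda g)\ge(1-\lambda^2)\|f-\lambda g\|$, whence
\[
\frac{\|f+\lambda g\|+\|f-\lambda g\|-2}{\lambda}\le\frac{(x_1+x_2)(f)-2+\lambda(x_1-x_2)(g)+2\lambda^2}{\lambda(1-\lambda^2)}\le\frac{d_3^*+2\lambda}{1-\lambda^2},
\]
which is the estimate your sketch needs and does not contain.

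Relatedly, your accounting of the constants is wrong, and this is a symptom of the missing step: the factor $1/(1-\lambda^2)$ and the additive $2\lambda$ do \emph{not} come from the normalization slack $s_\pm\in[1-\lambda,1+\lambda]$; they come from the $\lambda^2$ Bishop--Phelps tolerance just described (note $-2(1-\lambda^2)=-2+2\lambda^2$ in the numerator above). The normalization slack enters only through $\|h_i-f\|\le 2\lambda+\lambda^2\le 2\alpha+\alpha^2<\delta$, i.e.\ it is part of what the hypothesis $\alpha<\sqrt{\delta+1}-1$ pays for, guaranteeing $h_1,h_2\in D(S(X))\cap B(f,\delta)$ and hence $\|x_1-x_2\|\le d_3^*(f,\delta)$; on that last point your reading is correct. (A minor remark: your detour through Lemma~\ref{lem2} to get $\|f-h_\pm\|\le 2\lambda$ is unnecessary --- the direct computation $\bigl\|\frac{f\pm\lambda g}{\|f\pm\lambda g\|}-f\bigr\|\le\bigl|1-\|f\pm\lambda g\|\bigr|+\lambda\le 2\lambda$, used in the paper and already present in Lemma~\ref{lem1}(iii), avoids any nondegeneracy worries when $\lambda$ is not small.) In sum: the idea is the right one and is the paper's, but as written the proposal stops exactly where the work of (iii) begins.
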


\begin{proof}
$(i)$ follows from Lemma~\ref{lem1}$(i)$.

$(ii)$ holds since
$D^{-1}(D(S(X)) \cap B(f, \delta)) \subseteq S(B(X), f, \delta)$.

$(iii)$. Let $d^*_3 = d^*_3(f, \delta)$. Choose $0< \alpha < \sqrt{\delta+1} - 1$. Then $\alpha^2 + 2\alpha < \delta$. Let $g \in S(X^*)$, $0 < \lambda < \alpha$. As before,
\[
\left\|\frac{f \pm \lambda g}{\|f \pm \lambda g\|} - f \right\| \leq 2 \lambda.
\]

Find $h_1, h_2 \in D(S(X))$ such that
\[
\left\|\frac{f + \lambda g}{\|f + \lambda g\|} - h_1 \right\| \leq \lambda^2 \quad \mbox{and} \quad
\left\|\frac{f - \lambda g}{\|f - \lambda g\|} - h_2 \right\| \leq \lambda^2
\]
and $x_1, x_2 \in S(X)$ such that $h_i \in D(x_i)$, $i = 1, 2$.

Observe that $\|h_i-f\| \leq \lambda^2 + 2\lambda \leq \alpha^2+2\alpha <\delta$, i.e., $h_1, h_2 \in D(S(X)) \cap B(f, \delta)$. Thus $\|x_1-x_2\| \leq d^*_3$. Now,
\[
0 \leq 1-x_1\left(\frac{f+\lambda g}{\|f+\lambda g\|}\right) = x_1 \left(h_1 - \frac{f + \lambda g}{\|f + \lambda g\|}\right) \leq \left\|\frac{f + \lambda g}{\|f + \lambda g\|} - h_1\right\| \leq \lambda^2.
\]
So, $ x_1(f + \lambda g) \geq (1-\lambda^2)\|f + \lambda g\|$. Similarly, $x_2(f - \lambda g) \geq (1- \lambda^2)\|f - \lambda g\|$. So, we have
\beqa
\frac{\|f + \lambda g\| + \|f - \lambda g\| -2}{\lambda} & \leq & \frac{x_1 (f + \lambda g)+ x_2(f - \lambda g) - 2 (1-\lambda^2)}{\lambda (1-\lambda^2)} \\
& = & \frac{(x_1 + x_2)(f) - 2 + \lambda(x_1-x_2)(g) + 2 \lambda^2}{\lambda (1-\lambda^2)}\\
& \leq & \frac{\|x_1-x_2\| + 2\lambda}{1-\lambda^2} \leq \frac{d^*_3 + 2\lambda}{1-\lambda^2} \leq \frac{d^*_3+2\alpha}{1-\alpha^2}
\eeqa
(since $\ds \frac{d^*_3+2\lambda}{1-\lambda^2}$ is increasing in $\lambda$). Thus,
\[d^*_1(f, \alpha) \leq \frac{d^*_3+2\alpha}{1-\alpha^2}.\]
\end{proof}

\begin{remark} \rm
The proof of $(iii)$ is adapted from the proof of \cite[Lemma 2]{Ba2}. Notice that unlike the proof of \cite[Lemma 3.1]{GGS}, our proof uses just the Bishop-Phelps Theorem and not Bollob\'as' extensions.
\end{remark}

We now come to our characterisation theorem. Though the proof is similar to that of Theorem~\ref{thm2}, there are added technicalities as pointed out earlier.

\begin{theorem}
For a Banach space $X$, the following statements are equivalent~:
\bla
\item $X^*$ has w*-UMIP.
\item Every $x \in S(X)$ is uniformly semidenting, i.e., given $\e >0$, there exists $\delta >0$ such that for any $x \in S(X)$, there exists $f \in S(X^*)$ such that
    \[
    S(B(X), f, \delta) \subseteq B(x, \e).
    \]
\item The inverse duality map is uniformly quasicontinuous, i.e., given $ \e >0$, there exists $\delta >0$ such that for any $x \in S(X)$, there exists $f \in D(S(X))$ such that $D^{-1}( D(S(X)) \cap B(f, \delta)) \subseteq B(x,\e)$.
\item Given $\e >0 $ there exists $\delta >0$ such that every support mapping on $X^*$ that maps $D(S(X))$ into $S(X)$ maps any $\Delta \subseteq D(S(X))$ that is a $\delta$-net in $S(X^*)$ to an $\e$-net in $S(X)$.
\item Given $\e >0 $ there exists $\delta >0$ such that for every $x \in S(X)$, there exists $f \in M_{\e, \delta}(X^*) \cap D(S(X))$ such that $D^{-1}(f) \subseteq B(x, \e)$.
\el
\end{theorem}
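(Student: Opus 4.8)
The plan is to follow the architecture of Theorem~\ref{thm2}, establishing $(a)\Lra(b)$ directly and then closing the cycle $(b)\Ra(c)\Ra(e)\Ra(b)$ together with $(c)\Lra(d)$, while systematically interchanging the roles of $X$ and $X^*$ and replacing Lemma~\ref{lem1} by Lemma~\ref{lem3}. For $(a)\Ra(b)$, fix $0<\e<1$ and $x\in S(X)$, take $K=K(\e)\geq 1$ from $(a)$ for $\e/3$, and put $C=\{g\in B(X^*):g(x)\geq \e/3\}$. Since $x\in X$, the map $g\mapsto g(x)$ is w*-continuous, so $C$ is a \emph{w*-compact} convex set with $d(0,C)\geq\e/3$ and $diam(C)\leq 2\leq M(\e)$. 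Feeding $C$ into $(a)$ yields a ball $B[G_0,r]\supseteq C$ with $r\leq K$ and $d(0,B[G_0,r])\geq\e/6$, so $\|G_0\|>r+\e/9$. I would then check that the slice $S(B(X),G_0/\|G_0\|,\delta)$, with $\delta=1-K/(K+\e/9)$, lies in $B(x,\e)$: for $y$ in this slice one has $\inf_{g\in B[G_0,r]}g(y)=G_0(y)-r\|y\|>(1-\delta)\|G_0\|-r>0$, whence $\{g\in B(X^*):g(x)>\e/3\}\subseteq C\subseteq B[G_0,r]\subseteq\{g:g(y)>0\}$; applying Lemma~\ref{lem2} in the base space $X^*$ to $x$ and $y/\|y\|$, regarded as elements of $S(X^{**})$, gives $\|x-y/\|y\|\|<2\e/3$, and since $\|y\|>1-\delta$ forces $1-\|y\|<\e/3$, we conclude $\|x-y\|<\e$. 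This is the exact dual counterpart of $(a)\Ra(b)$ in Theorem~\ref{thm2}.

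For $(b)\Ra(a)$ I would split into the two cases of Theorem~\ref{thm2}, the case $C\not\subseteq B[0,M(\e)+\e/2]$ being direct. In the remaining case set $L=M(\e)+\e$, choose $\delta$ from $(b)$ for $\e/(8L)$, and let $D=C+\frac{\e}{2}B(X^*)$; as a sum of two w*-compact convex sets this is itself \emph{w*-compact} convex, with $D\subseteq B[0,L]$ and $d(0,D)\geq\e/2$. Here w*-compactness is exactly what lets the separation theorem in $(X^*,w^*)$, whose continuous dual is $X$, produce a \emph{predual} functional $x\in S(X)$ with $\inf_{g\in D}g(x)$ bounded below by a fixed multiple of $\e$. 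Applying $(b)$ to this $x$ gives $f\in S(X^*)$ with $S(B(X),f,\delta)\subseteq B(x,\e/8L)$, and one checks this slice lands inside $\{y\in B(X):g(y)>0 \mbox{ for all } g\in D\}$. Now, instead of quoting Theorem~\ref{thm1}, I would apply Lemma~\ref{lem2} in the base space $X$ to each pair $f,\,g/\|g\|$ $(g\in D)$ and rerun the radius computation from the proof of Theorem~\ref{thm1}; because the ambient space is now $X^*$, the resulting ball $B[\lambda f,\lambda-d(0,D)\delta/(2-\delta)]\supseteq D$ has center $\lambda f\in X^*$ and controlled radius $\leq K(\e)=L/\delta+1$, with $0$ outside.

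The remaining implications copy the pattern of Theorem~\ref{thm2} with each $d_i$ replaced by $d_i^*$ and Lemma~\ref{lem1} by Lemma~\ref{lem3}. For $(b)\Ra(c)$ one uses the inclusion $D^{-1}(D(S(X))\cap B(f,\delta))\subseteq S(B(X),f,\delta)$ recorded in Lemma~\ref{lem3}$(ii)$; but as $(b)$ only supplies a general $f\in S(X^*)$, one must first invoke the Bishop--Phelps theorem to replace it by a nearby norm-attaining $f'\in D(S(X))$, shrinking $\delta$ to absorb $\|f-f'\|$. Then $(c)\Ra(e)$ turns the bound $d_3^*(f,\delta)\leq 2\e/3$ into $f\in M_{\e,\alpha}(X^*)$ via Lemma~\ref{lem3}$(iii)$ for a suitably small $\alpha<\sqrt{\delta+1}-1$, while $D^{-1}(f)\subseteq B(x,\e)$ is immediate; and $(e)\Ra(b)$ uses Lemma~\ref{lem3}$(i)$ to make $diam(S(B(X),f,\alpha))<\e/2$ and then any point $x_f\in D^{-1}(f)$ --- which exists precisely because $(e)$ guarantees $f\in D(S(X))$ --- to conclude $S(B(X),f,\alpha)\subseteq B(x,\e)$. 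Finally $(c)\Lra(d)$ is the net reformulation exactly as in Theorem~\ref{thm2}, the only change being that the $\delta$-net $\Delta$ and the support mapping are constrained to $D(S(X))$.

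The step I expect to be the main obstacle is not analytic but organisational: keeping track of the possible emptiness of $D^{-1}(f)$. Unlike the duality map $D$, defined on all of $S(X)$, the inverse duality map is supported only on the norm-attaining functionals $D(S(X))$, and this is precisely why $(c)$--$(e)$ and $(d)$ must carry the side conditions ``$f\in D(S(X))$'' and ``maps $D(S(X))$ into $S(X)$''. Consequently the Bishop--Phelps perturbation in $(b)\Ra(c)$ and the selection $x_f\in D^{-1}(f)$ in $(e)\Ra(b)$ are the genuinely new ingredients relative to Theorem~\ref{thm2}; the remaining care is simply to ensure that w*-compactness of $C$ and of $D$ survives every construction in $(a)\Lra(b)$, so that all separating functionals stay in $X$ and all ball centres stay in $X^*$.
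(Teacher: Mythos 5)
Your proposal is correct and follows essentially the same route as the paper's proof: the paper likewise handles $(a)\Leftrightarrow(b)$ by dualizing Theorem~\ref{thm2} (with exactly the w*-compactness of $C$ and $D=C+\frac{\e}{2}B(X^*)$ and the w*-separation producing a predual functional that you spell out), uses Bishop--Phelps together with a slice-perturbation argument for $(b)\Rightarrow(c)$, Lemma~\ref{lem3} for $(c)\Rightarrow(e)\Rightarrow(b)$, and the same net construction constrained to $D(S(X))$ for $(c)\Leftrightarrow(d)$. The only differences are cosmetic: slightly different constants, and your rerunning of Lemma~\ref{lem2} and the radius computation in place of the paper's citation of ``the proof of Theorem~\ref{thm1}'', which amounts to the same calculation.
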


\begin{proof}
$(a) \Ra (b)$ is similar to Theorem~\ref{thm2} $(a) \Ra (b)$.

$(b) \Ra (a)$.
Define $D = C + \frac{\e}{2} B(X^*)$. Then $D$ is w*-compact, and $d(0, D) \geq \e/2$. Therefore, $D$ is disjoint from the w*-compact convex set $3\e/8 B(X^*)$. Hence there exists $x \in S(X)$ such that $\inf \hat{x}(D) \geq 3\e/8$.

Rest of the proof is similar to Theorem~\ref{thm2} $(b) \Ra (a)$.

$(b) \Ra (c)$. Let $\e >0$ be given. By $(b)$, there exists $\delta >0$ such that for any $x \in S(X)$, there exists $g \in S(X^*)$ such that
\[
S(B(X), g, 2\delta) \subseteq B(x, \e).
\]

As in \cite[Lemma 3.2.6]{B}, if we choose $0 < \eta < \delta/4$ then for any $h \in B(g, \eta)$,
\[
S(B(X), h, \delta) \subseteq S(B(X), g, 2\delta).
\]

By Bishop-Phelps Theorem, there exists $f \in D(S(X)) \cap B(g, \eta)$. Then
\[
S(B(X), f, \delta) \subseteq S(B(X), g, 2\delta) \subseteq B(x, \e).
\]

And $D^{-1}(D(S(X)) \cap B(f, \delta)) \subseteq S(B(X), f, \delta)$. Therefore, $(c)$ holds.

$(c) \Ra (e)$. Let $\e >0 $ be given. Then from $(c)$ for $\e/4$, there exists $\delta >0$ such that for any $x \in S(X)$, there exists $f \in D(S(X))$ such that $D^{-1}(D(S(X)) \cap B(f, \delta)) \subseteq B(x,\e/4)$. This implies $d^*_3(f, \delta) \leq \e/2$. By Lemma~\ref{lem3}$(iii)$,
\beqa
d^*_1(f, \alpha) & \leq & \frac{d^*_3(f, \delta) + 2\alpha} {1-\alpha^2} \leq \frac{\e/2 + 2\alpha} {1-\alpha^2} < \e \\ && \mbox{ if } \alpha < \sqrt{\delta+1} - 1 \mbox{ and } \frac{\e/2 + 2\alpha} {1-\alpha^2} < \e.
\eeqa

That is, $f \in M_{\e, \alpha}(X^*)$. Clearly, the choice of $\alpha$ depends only on $\e$. And $D^{-1}(f) \subseteq B(x, \e/4) \subseteq B(x, \e)$.

$(e)\Ra (b)$.  Let $\e >0$ be given. Choose $0 <\delta <1$ obtained from (e) for $\e/4$. So, given $x \in S(X)$, there is $f \in M_{\e/4, \delta}(X^*) \cap D(S(X))$ such that $D^{-1}(f) \subseteq B(x, \e/4)$. As $f \in M_{\e/4, \delta}(X^*)$, $d_1^*(f, \delta) <\e/4$. By Lemma~\ref{lem3}, $d_2^*(f, \alpha) < \e/2$ for $0 <\alpha < \e \delta/8$. That is, $diam(S(B(X), f, \alpha)) < \e/2$.

Also, $D^{-1}(f) \subseteq B(x, \e/4)$ and $f \in D(S(X))$. Hence for any $y \in S(B(X), f, \alpha)$ and $z \in D^{-1}(f)$
\[
\|x-y\| \leq \|x - z\| + \|y - z\| \leq \e/4 + \e/2 < \e.
\]
Therefore,
\[
S(B(X), f, \alpha) \subseteq B(x, \e).
\]

$(c) \Ra (d)$ is obvious.

$(d) \Ra(c)$. If $(c)$ does not hold, there exists $\e > 0$ such that for all $\delta > 0$, there exists $x_\delta \in S(X)$ such that for all $f \in D(S(X))$, there exists $g_f \in D(S(X))$ and $y_{f} \in D^{-1}(g_f)$ such that $\|f - g_{f}\| < \delta/2$ and $\|x_\delta - y_{f}\|\geq \e$.

Define $\Delta = \{g_{f} : f \in D(S(X))\}$. Clearly, $\Delta$ forms a $\delta/2$-net in $D(S(X))$. Since $D(S(X))$ is dense in $S(X^*)$, $\Delta$ forms a $\delta$-net in $S(X^*)$ as well. Define a support mapping $\Phi : D(S(X)) \to S(X)$ as follows~: \blr
\item If $g \in \Delta$, we choose any $f \in D(S(X))$ such that $g = g_f$ and define $\Phi(g) = y_f$. Observe that, by definition of $\Delta$, there is at least one such $f \in D(S(X))$.
\item If $g \in D(S(X)) \setminus \Delta$, define $\Phi(g) = y$ for some $y \in D^{-1}(g)$.
\el
Now, as $\|x_\delta - \Phi(g)\|\geq \e$ for all $g \in \Delta$, $\Phi(\Delta)$ does not form an $\e$-net in $S(X)$. So, $(d)$ does not hold.
\end{proof}

\bibliographystyle{amsplain}

\end{document}